\title{Conflict-free coloring on closed neighborhoods of bounded degree graphs}
\author{Sriram Bhyravarapu}
\author{Subrahmanyam Kalyanasundaram}
\author{Rogers Mathew}
\affil
{
Department of Computer Science and Engineering, \authorcr
Indian Institute of Technology Hyderabad, India - 502285. \authorcr {\tt \{cs16resch11001, subruk, rogers\}@iith.ac.in}
}
\theoremstyle{definition}
\theoremstyle{plain}
\newtheorem{theorem}{Theorem}
\newtheorem{observation}[theorem]{Observation}
\newtheorem{definition}[theorem]{Definition}
\theoremstyle{remark}
\begin{document}

\maketitle
\begin{abstract}
The \emph{closed neighborhood conflict-free chromatic number} of a graph $G$, denoted by $\chi_{CN}(G)$, is the minimum number of colors required to color the vertices of $G$ such that for every vertex, there is a color that appears exactly once in its closed neighborhood. %Let $\Delta$ be the maximum degree of any vertex in $G$. 
Pach and Tardos [Combin. Probab. Comput. 2009] showed that $\chi_{CN}(G) = O(\log^{2+\varepsilon} \Delta)$, for any $\varepsilon > 0$,
where $\Delta$ is the maximum degree. In [Combin. Probab. Comput. 2014], Glebov, Szab\'o and Tardos showed existence of graphs $G$ with $\chi_{CN}(G) = \Omega(\log^2\Delta)$. In this paper, we bridge the gap between the two bounds by showing that $\chi_{CN}(G) = O(\log^2 \Delta)$. 
%\emph{Conflict free chromatic number} of a hypergraph $H=(V,E)$ is the minimum number of colors required to color its vertices such that for every hyperdge $e \in E$ there is a color that appears exactly once in $e$. The \emph{closed neighborhood conflict free chromatic number} of a graph $G=(V,E)$, denoted by $\chi_CN(G)$, is the conflict free chromatic number of the hypergraph constructed from $G$ whose hyperedges are the closed neighbor where the set of hyperedges are the set of closed      
\end{abstract}
%\section{Conflict free coloring}
%\begin{definition}\label{def:CFCN}[Conflict-Free Coloring on Closed Neighborhoods]
%The \emph{conflict-free chromatic number} of a hypergraph $H=(V,E)$, denoted by $\chi_{CF}(H)$, is the 
%minimum number of colors required to color the points in $V$ such that every $e \in E$ contains a point 
%whose color is distinct from that of every other point in $e$.  
%\end{definition}
Conflict-free coloring was introduced in 2003 \cite{Even2002} motivated by problems arising from situations in wireless communication. Over the past two 
decades, conflict-free coloring has  been extensively studied \cite{smorosurvey}.
%in the context of hypergraphs created out of graphs. 
\begin{definition}[Conflict-free chromatic number of hypergraphs]\label{def:CF_hypergraph}
The \emph{conflict-free chromatic number} of a hypergraph $H=(V,E)$  is the minimum number of colors required to color the points in $V$ such that every $e \in E$ contains a point whose color is distinct from that of every other point in $e$. 
\end{definition}
\noindent Conflict-free coloring has also been studied in the context of hypergraphs created out of simple graphs. Two such variants are \emph{conflict-free coloring on closed neighborhoods} and 
\emph{conflict free coloring on open neighborhoods}. In this manuscript, we focus on the former variant. 
Given a graph $G$, for any vertex $v \in V(G)$, let $N_G(v) : = \{u \in V(G):\{u,v\} \in E(G)\}$ denote the \emph{open neighborhood} of $v$ in $G$. Let $N_G[v] := N_G(v) \cup \{v\}$ denote the \emph{closed neighborhood} of $v$ in $G$. 
\begin{definition}[{Closed neighborhood conflict-free chromatic number}]\label{defn:closed_CF}
Given a graph $G = (V, E)$, a conflict-free coloring on closed neighborhoods (CFCN coloring) 
is an assignment of colors
$C: V(G) \rightarrow \{1, 2, \ldots, k\}$ such that for every $v \in V(G)$, there exists an 
$i \in \{1, 2, \ldots, k\}$ such that $|N[v] \cap C^{-1} (i)| = 1$. The smallest $k$ required for such a coloring 
%required for a CFCN coloring of $G$ 
is called the CFCN chromatic number of $G$, denoted $\chi_{CN}(G)$.
\end{definition} 
In other words, given a graph $G$, let $H$ be the hypergraph with $V(H) = V(G)$ and $E(H) = \{N_G[v]:v \in V(G)\}$. Then,   $\chi_{CN}(G)$ is equal to the conflict-free chromatic number of the hypergraph $H$ created from $G$. 
%\begin{definition}[\emph{Open conflict-free chromatic number}] \label{defn:open_CF}
%Given a graph $G$, let $H$ be denote the hypergraph with $V(H) = V(G)$ 
%and $E(H) = \{N_G(v)~:~v \in V(G)\}$. Then the open conflict-free 
%chromatic number of $G$, denoted by $\chi_{CF}^{ON}(G)$, is equal to 
%$\chi_{CF}(H)$. 
%\end{definition} 
%\subsection{An upper bound for $\chi_{CF}(H)$}
Pach and Tardos \cite{Pach2009} showed that for a graph $G$ with maximum degree
$\Delta$, the CFCN chromatic number $\chi_{CN}(G) = O(\log^{2 + \varepsilon} \Delta)$ for any $\varepsilon > 0$. We improve this bound and show the following.
\begin{theorem}\label{thm:cfcntight}
Let $G$ be a graph with maximum degree $\Delta$. Then $\chi_{CN}(G) = O(\log^2 \Delta)$.
\end{theorem}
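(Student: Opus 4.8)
The plan is to improve the Pach--Tardos argument by eliminating the $\varepsilon$ loss in their probabilistic/iterative scheme. Their $O(\log^{2+\varepsilon}\Delta)$ bound typically arises from a recursive partitioning of the vertex set into $O(\log\Delta)$ ``levels'' or random subsets, where at each level one pays an extra polylog factor to make a union bound over all vertices go through via the Lov\'asz Local Lemma (LLL). I would first set up the following framework: randomly partition $V(G)$ into $t = \Theta(\log\Delta)$ classes $V_1,\dots,V_t$, where each vertex is placed into $V_i$ independently with a carefully chosen (geometrically decreasing) probability $p_i$. The idea is that for each vertex $v$, there should be some level $i$ such that $v$'s closed neighborhood contains exactly one vertex of $V_i$ that receives a special ``unique'' color; summing over the $\Theta(\log\Delta)$ levels, and using $\Theta(\log\Delta)$ colors within each level, would give the target $O(\log^2\Delta)$.

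The key steps, in order, are as follows. First I would fix the probabilities $p_i$ so that at some level the restriction of $N[v]$ to $V_i$ has size concentrated around a constant---this is where a vertex of degree $d(v)$ is ``caught'' at level $i \approx \log d(v)$. Second, within each level $V_i$, I would apply a second, independent random coloring with $O(\log\Delta)$ colors and argue (via LLL) that with positive probability every vertex whose neighborhood is appropriately sized at level $i$ sees a color exactly once among $N[v]\cap V_i$. The crucial saving compared to Pach--Tardos is to avoid taking a union bound that forces an extra $\log^{\varepsilon}\Delta$ slack: instead I would define, for each vertex, a single ``bad event'' that it is not conflict-free-satisfied at \emph{any} level, show this event depends on only $\mathrm{poly}(\Delta)$ other such events, and verify the LLL condition $e\,p\,(D+1)\le 1$ with $p$ exponentially small in the number of colors per level. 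Getting $p$ small enough with exactly $O(\log\Delta)$ colors per level (rather than $\log^{1+\varepsilon}\Delta$) is the heart of the improvement, and it should follow because the probability a fixed vertex fails at a level where its neighborhood is concentrated around a constant decays like $\Delta^{-c}$ once $\Omega(\log\Delta)$ colors are used, giving room to absorb the $\mathrm{poly}(\Delta)$ dependency degree.

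The main obstacle I anticipate is the interaction between the two sources of randomness (the partition into levels and the coloring within levels) and the dependency structure this induces: a single vertex $v$ can be affected by the colors and level-assignments of all vertices within distance two in $G$, so the dependency degree in the LLL is $\mathrm{poly}(\Delta)$, and I must ensure the per-vertex failure probability beats this polynomial without spending more than $O(\log\Delta)$ colors per level. Handling vertices of \emph{low} degree---where no level concentrates the neighborhood nicely and the concentration (Chernoff) bounds are weak---will require a separate, more direct argument, likely reserving a constant number of dedicated colors or treating such neighborhoods deterministically. I would organize the proof so that high-degree vertices are handled by the LLL-on-levels machinery and a boundary analysis cleanly patches the low-degree regime, and I expect the tight constant in the final $O(\log^2\Delta)$ to come out of balancing $t = \Theta(\log\Delta)$ levels against $\Theta(\log\Delta)$ colors per level.
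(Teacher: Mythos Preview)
Your plan diverges entirely from the paper's argument, and as written it contains a concrete gap. The sentence ``the probability a fixed vertex fails at a level where its neighborhood is concentrated around a constant decays like $\Delta^{-c}$ once $\Omega(\log\Delta)$ colors are used'' is not correct. If $|N[v]\cap V_i| = s$ with $s$ a fixed constant and you color $V_i$ uniformly from $k=\Theta(\log\Delta)$ colors, the probability that no color appears exactly once in $N[v]\cap V_i$ is only polylogarithmically small---for $s=2$ it is exactly $1/k$, and for general constant $s$ it is $\Theta(k^{-\lfloor s/2\rfloor})$---never $\Delta^{-c}$. To obtain an inverse-polynomial failure probability from a single level you would need $|N[v]\cap V_i|=\Theta(\log\Delta)$, not $\Theta(1)$; but then the level that ``catches'' a degree-$d$ vertex is $i\approx\log(d/\log\Delta)$ rather than $i\approx\log d$, and vertices with $d=O(\log\Delta)$ are never caught. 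With only a $1/\mathrm{polylog}(\Delta)$ failure probability per vertex, the LLL condition $e\,p\,(D+1)\le 1$ against a dependency degree $D=\mathrm{poly}(\Delta)$ cannot hold, and your scheme does not improve on $O(\log^{2+\varepsilon}\Delta)$.

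For contrast, the paper does not touch the probabilistic machinery at all; it removes the small-neighborhood obstruction by a deterministic preprocessing. For $k=4\log\Delta$ rounds it greedily extracts a maximal distance-$3^+$ set $A_i$ from the current induced subgraph $G_i$, assigns all of $A_i$ a single fresh color $c_i$ (which then appears exactly once in $N_G[v]$ for every $v\in A_i\cup B_i$, where $B_i=N_{G_i}(A_i)$), and recurses on $G_{i+1}=G_i-(A_i\cup B_i)$. Any vertex $v$ that survives all $k$ rounds necessarily has a neighbor in each $B_i$, so the hyperedge $e_v=N_G(v)\cap\bigcup_i B_i$ has size at least $4\log\Delta+1$; since each such hyperedge meets at most $\Delta^2$ others, the Pach--Tardos hypergraph theorem applied with $t=2\log\Delta$ and $\Gamma=\Delta^2$ gives an $O(t\,\Gamma^{1/t}\log\Gamma)=O(\log^2\Delta)$ conflict-free coloring of $\bigcup_i B_i$, serving all surviving vertices. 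The $\varepsilon$-loss in the original bound came precisely from hyperedges (closed neighborhoods) that are too small to set $t\approx\log\Gamma$; the iterative peel-off disposes of those vertices first using only $O(\log\Delta)$ extra colors.
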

In 2014, Glebov, Szab\'o and Tardos \cite{glebov2014conflict} showed the existence of graphs $G$
on $n$ vertices
such that $\chi_{CN}(G) = \Omega(\log^2 n)$. Since  
$\Delta < n$, our bound in Theorem \ref{thm:cfcntight} is tight up to constants. 

Before we proceed to the proof, we explain some notations. All logarithms we consider 
here are to the base $2$. 
%The chromatic number of a graph $G$, denoted $\chi(G)$, is the fewest number of colors required for a proper coloring of $G$. 
Given a graph $G$ and a set $S \subseteq V(G)$, we use $G[S]$ to denote the subgraph of $G$ induced on the vertex set $S$. For any two vertices $u,v \in V(G)$, we use $dist_G(u,v)$ to denote the number of edges in
a shortest path between $u$ and $v$ in $G$. We set $dist_G(u,v) = \infty$ 
when there is no path between $u$ and $v$ in $G$.
%We use 
%$d_G(u)$ to denote the degree of vertex $u$ in $G$, that is, $d_G(u) = |N_G(u)|$.   
\begin{definition}[Maximal Distance-$3^+$ Set]
For a graph $G$, a \emph{maximal distance-$3^+$ set} is a set $A \subseteq V(G)$ that satisfies the following:
\begin{enumerate}
    \item For every two distinct $u, v \in A$, $dist_G(u,v) \geq 3$. 
    %\item For every $u \in A$, $\exists v \in A$ such that $dist_G(u,v) = 3$.
    \item For every $v \in V(G)\setminus A$, $\exists u \in A$ such that $dist_G(u,v) < 3$. 
\end{enumerate}
\end{definition}
Let $A$ be a maximal distance-$3^+$ set in $G$. Let $B = \{v \in V(G) \setminus A:v\mbox{ has a neighbor in }A\}$ and let $C = V(G) \setminus (A
\cup B)$. 
%Since $A$ is a maximal distance-$3^+$ set, 
We make the following observations. 
\begin{observation}
\label{obv:dist-3-set_exact_1_neighbor}
Every vertex in $B$ has exactly one neighbor in $A$.  
\end{observation}
%The observation below follows from Property 3 in the definition of a maximal distance-3 set. 
\begin{observation}
\label{obv:dist-3-set_atleast_1_neighbor}
Every vertex in $C$ has at least one neighbor in $B$.  
\end{observation}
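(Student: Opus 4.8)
The plan is to unpack what membership in $C$ means and then invoke the maximality of $A$. Fix an arbitrary $v \in C$. By the definition of $C$, we have $v \notin A$ and $v$ has no neighbor in $A$ (for otherwise $v$ would belong to $B$). First I would apply property (2) of the maximal distance-$3^+$ set to $v$: since $v \in V(G) \setminus A$, there exists some $u \in A$ with $dist_G(u,v) < 3$, i.e. $dist_G(u,v) \in \{1,2\}$.

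Next I would rule out the case $dist_G(u,v) = 1$. If $u$ were adjacent to $v$, then $v$ would have a neighbor in $A$, placing $v$ in $B$ and contradicting $v \in C$. Hence $dist_G(u,v) = 2$, and so there is a shortest path $u - w - v$ for some common neighbor $w$ of $u$ and $v$ (with $w$ distinct from both endpoints, as $u \in A$ and $v \notin A$ force $u \neq v$).

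The final step is to certify that $w \in B$. The vertex $w$ is adjacent to $u \in A$, and $w \notin A$ --- indeed, if $w \in A$ then $w$ would be a neighbor of $v$ lying in $A$, again contradicting $v \in C$. Since $w \notin A$ yet $w$ has the neighbor $u \in A$, the defining condition of $B$ gives $w \in B$. As $w$ is also a neighbor of $v$, we conclude that $v$ has a neighbor in $B$, which is what we wanted.

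I do not expect a genuine obstacle here; the single point requiring care is the bookkeeping of the three definitions, specifically recognizing that the distance-$2$ witness supplied by property (2) must pass through an intermediate vertex that is simultaneously adjacent to $A$ and not itself in $A$ --- which is precisely the defining condition for membership in $B$. Everything else is a direct application of the definitions of $A$, $B$, and $C$, together with Observation \ref{obv:dist-3-set_exact_1_neighbor} only implicitly in the background.
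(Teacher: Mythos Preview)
Your argument is correct and is exactly the intended justification; the paper itself treats this as an immediate observation and states it without proof. The one cosmetic remark is that Observation~\ref{obv:dist-3-set_exact_1_neighbor} is not actually needed here, even implicitly---your argument uses only the definitions of $A$, $B$, $C$ and property (2) of the maximal distance-$3^+$ set.
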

Our proof uses the following theorem on conflict-free coloring on 
hypergraphs due to  Pach and Tardos \cite{Pach2009}.
\begin{theorem}[Theorem 1.2 in \cite{Pach2009}]\label{thm_pach_main}
For any positive integers $t$ and $\Gamma$, the conflict-free chromatic number of any hypergraph in which each edge is of size at least $2t-1$ and each edge intersects at most $\Gamma$ others is $O(t\Gamma^{1/t}\log \Gamma)$. There is a randomized polynomial time algorithm to find such a coloring.   
\end{theorem}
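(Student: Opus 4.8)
The plan is to prove the statement probabilistically, via the Lov\'asz Local Lemma (LLL), by exhibiting a randomized coloring whose ``bad'' events are both rare and locally sparse. First I would fix the palette size $k = O(t\,\Gamma^{1/t}\log\Gamma)$ and specify a random coloring $C\colon V \to \{1,\dots,k\}$ in which the colors of distinct vertices are chosen independently. For each hyperedge $e$, let $A_e$ be the event that $e$ is \emph{not} conflict-free, i.e.\ that every color occurring in $e$ occurs at least twice. The goal is to show that with positive probability none of the $A_e$ occurs; any such outcome is, by Definition~\ref{def:CF_hypergraph}, a conflict-free coloring with $k$ colors.

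The local structure required by the LLL is immediate from the hypotheses. Since $A_e$ is determined solely by the colors assigned to the vertices of $e$, and these colors are mutually independent across vertices, $A_e$ is mutually independent of the family $\{A_{e'} : e'\cap e = \emptyset\}$. As each edge meets at most $\Gamma$ others, the dependency degree of $A_e$ is at most $\Gamma$. Hence, by the symmetric LLL, it suffices to establish the single quantitative estimate $\Pr[A_e] = O(1/\Gamma)$, small enough that the condition $\mathrm{e}\cdot\Pr[A_e]\cdot(\Gamma+1)\le 1$ holds for every edge $e$ (here $\mathrm{e}$ denotes the base of the natural logarithm). This yields a conflict-free coloring with $k$ colors and, by the Moser--Tardos algorithmic version of the LLL, a randomized polynomial-time procedure to produce one.

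The crux is therefore this single probability bound, and it is where the edge-size hypothesis $|e|\ge 2t-1$ and the exponent $1/t$ both enter. The plan is to make the random coloring \emph{scale-free}, organizing the palette along $O(\log\Gamma)$ geometrically spaced ``levels'' so that for an edge $e$ of \emph{arbitrary} size there is a level at which only a bounded number of its vertices are active; this is what prevents large edges from being systematically bad, and it accounts for the $\log\Gamma$ factor. At this critical scale the bound $|e|\ge 2t-1$ is exactly what is needed to furnish roughly $t$ essentially independent collision-type trials for producing a uniquely colored vertex, each trial drawing from a sub-palette of size $\Theta(\Gamma^{1/t})$ and failing (a color collision) with probability $O(\Gamma^{-1/t})$. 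Multiplying the failure probabilities of the $t$ independent trials gives $\Pr[A_e] = O(\Gamma^{-1})$, as required, while the palette dimensions multiply to $O(t\cdot\Gamma^{1/t}\cdot\log\Gamma)$ colors.

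I expect the main obstacle to be precisely the verification of $\Pr[A_e] = O(1/\Gamma)$ \emph{uniformly over all edge sizes} at least $2t-1$: one must simultaneously control arbitrarily large edges using the multi-scale thinning, with only $O(\log\Gamma)$ levels available, and extract a genuine $t$-th power from the size guarantee rather than the single factor $\Gamma^{-1/t}$ that a one-shot argument would yield. Making the $t$ trials rigorously independent (or replacing independence by a suitable correlation inequality), and checking that the worst case over the random way in which $e$'s vertices spread across levels and sub-palettes still meets the LLL threshold, is the technically delicate part. The remaining steps---the dependency bookkeeping for the LLL and the passage from existence to a randomized polynomial-time algorithm---are then routine.
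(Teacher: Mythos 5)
You should first know that the paper contains no proof of this statement at all: Theorem~\ref{thm_pach_main} is quoted verbatim from Pach and Tardos \cite{Pach2009} and used as a black box, so your attempt can only be measured against the original argument. In spirit you are on the right track: the proof in \cite{Pach2009} is indeed a Lov\'asz Local Lemma argument over a product random coloring with a geometrically organized palette, and the algorithmic claim is the constructive LLL, just as you say. The dependency bookkeeping ($A_e$ mutually independent of $\{A_{e'} : e' \cap e = \emptyset\}$, dependency degree at most $\Gamma$) is fine.

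The genuine gap is the one you flag but underestimate, and as proposed your architecture cannot close it. For \emph{any} scheme in which vertices draw colors i.i.d.\ from a fixed distribution $(p_1,\dots,p_k)$, the probability that color $i$ appears exactly once in $e$ is $|e|\,p_i(1-p_i)^{|e|-1}$, which tends to $0$ for every fixed $i$ as $|e| \to \infty$; summing over the $k$ colors shows that every sufficiently large edge fails to be conflict-free with probability $1-o(1)$, so the LLL premise $\Pr[A_e] \le 1/(\mathrm{e}(\Gamma+1))$ is unsalvageable for huge edges under your bad-event definition. Concretely, with activation probabilities $2^{-\ell}$ and only $L = O(\log\Gamma)$ levels, your claim that ``for an edge of arbitrary size there is a level at which only a bounded number of its vertices are active'' is simply false once $|e| \gg 2^{L}$, i.e.\ once $|e|$ exceeds $\Gamma^{O(1)}$; and adding levels does not help, since $L$ levels only cover scales up to $2^L$ while the palette budget is $O(t\Gamma^{1/t}\log\Gamma)$. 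The missing idea is a reduction to bounded traces \emph{before} randomizing, in the spirit of \cite{Pach2009}: fix for each edge $e$ a set $R_e \subseteq e$ of exactly $2t-1$ representative vertices, let $S = \bigcup_e R_e$, and color all of $V \setminus S$ with a single reserved color. Since each edge intersects at most $\Gamma$ others, $e \cap S$ is contained in $R_e$ together with at most $\Gamma$ other representative sets, so $2t-1 \le |e \cap S| \le (2t-1)(\Gamma+1)$; a color unique in $e \cap S$ is then automatically unique in $e$. Now the ratio of largest to smallest trace size is $O(\Gamma)$, which is exactly why $O(\log\Gamma)$ levels suffice, and your multiscale computation (sub-palettes of size $\Theta(t\Gamma^{1/t})$, roughly $t$ collision trials each failing with probability $O(\Gamma^{-1/t})$) can go through --- though the ``independence'' of the $t$ trials still needs an actual coupling or correlation argument rather than an assertion, and the symmetric LLL requires the explicit bound $\Pr[A_e] \le 1/(\mathrm{e}(\Gamma+1))$, not an unspecified $O(1/\Gamma)$. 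One mitigating remark: in the application made in this paper the hyperedges $e_v$ have size at most $\Delta = \Gamma^{1/2}$, so your bounded-scale scheme would suffice for the special case actually used here --- but not for Theorem~\ref{thm_pach_main} as stated.
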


\begin{proof}[Proof of Theorem \ref{thm:cfcntight}]
We perform the following iterative process starting with $G_0 = G$.

\noindent \textbf{Iterative coloring process:}
Let $A_i$ be a maximal distance-$3^+$ set in $G_i$. Let $B_i := \{v \in V(G_i) \setminus A_i: v \mbox{ has a neighbor in }A_i\}$ and $C_i := V(G_i) 
\setminus (A_i \cup B_i)$.  Assign a  color $c_i$ to all the vertices in $A_i$.  Observation \ref{obv:dist-3-set_exact_1_neighbor} combined  with 
the fact that $A_i$ is an independent set in $G_i$ imply that for every vertex  $v \in A_i \cup B_i$,
$N_G[v]$ contains exactly one vertex with the color $c_i$.
%now has exactly one vertex in its closed neighborhood \bscomment{in
%$G$} with color $c_i$. 
Repeat the above process with $G_{i+1} = G[C_i]$. 

The iterative process is repeated till one of the following two conditions is satisfied: 
(i) $G_{i}$ is the empty graph, or (ii) $i = k = 4 \log \Delta$. 

%Let $A_1$ be a maximal distance-3 set in $G_1$. Let $B_1 := \{v \in V(G_1) \setminus A_1~:~v \mbox{ has a neighbor in }A_1\}$ and $C_1 := V(G_1) \setminus (A_1 \cup B_1)$.  Assign a  color $c_1$ to all the vertices in $A_1$.  Observation \ref{obv:dist-3-set_exact_1_neighbor} combined  with the fact that $A_1$ is an independent set in $G_1$ imply that every vertex in $A_1 \cup B_1$ now has exactly one vertex in its closed neighborhood in $G$  with color $c_1$. Now, let $G_2 = G[C_1]$. We repeat this process for, say $k$ rounds, by constructing graphs $G_1, G_2, \ldots , G_k$ until one of the following two conditions is satisfied: (i) $G_{k+1}$ is the empty graph, or (ii) $k = 4 \log \Delta$. 
If the process terminated with $i <  4\log \Delta$, then we have CFCN-colored  $G$ with $O(\log \Delta)$ colors. Suppose it terminated 
with $i = k= 4\log \Delta$. We know that every vertex in $V(G) \setminus C_k$ has some color appearing exactly once in its closed 
neighborhood under the present coloring. In order to complete the proof, we need to extend this `nice' property to the vertices of $C_k$ as well. If $C_k$ is the empty set, then the proof is complete. 
Asssume $C_k$ is non-empty. Let $H$ be a hypergraph constructed from $G$  as explained here. Let $V(H) = B_0 \cup B_1 \cup \cdots \cup B_k$ 
and $E(H) = \{e_v : v \in C_k\}$, where 
%$e_v = \{N_G(v) \cap (B_0 \cup B_1 \cup \cdots \cup B_k)\}$. 
$e_v = \{N_G(v) \cap V(H)\}$. 
We make a crucial observation here that each vertex in the set $\cup_{i=0}^k B_i$ is uncolored so far. Consider a vertex $v$ in $C_k$. From Observation \ref{obv:dist-3-set_atleast_1_neighbor}, $v$ has at least one neighbor in each of $B_0, B_1, \ldots , B_k$ thus making the size of $e_v$ at least $4\log \Delta + 1$. Further, each hyperedge in this hypergraph intersects at most $\Delta^2$ other hyperedges. Substituting $t = 2\log \Delta$ and $\Gamma = \Delta^2$ in Theorem \ref{thm_pach_main}, we can see that the conflict-free chromatic number of the hypergraph $H$ is $O(\log^2\Delta)$. We ensure that the colors we use to color the points in the hypergraph $H$  is disjoint from the set $\{c_0, c_1, \ldots , c_k\}$. Now, consider the graph $G$. For each vertex $v \in B_0 \cup B_1 \cup \cdots \cup B_k = 
V(H)$, we assign the color it obtained while coloring $H$. This would mean that every vertex in $C_k$ now has some color appearing exactly once in its closed neighborhood in $G$ and thereby satisfying the `nice' property mentioned above. Finally, use a new color (that has not been used so far) to color all the so far uncolored vertices in $G$. 
This completes  the proof
of the theorem.   
\end{proof}

\bibliographystyle{plain}
\bibliography{bibfile}

\end{document}